\documentclass[12pt]{article}

\usepackage{amssymb,amscd,amsthm, verbatim,amsmath,color,fancyhdr, mathrsfs,enumitem}
\usepackage{graphicx}
\usepackage{turnstile}
\usepackage{fullpage}
\usepackage{amssymb}
\usepackage{amsmath}

\usepackage[letterpaper, left=2.5cm, right=2.5cm, top=2.5cm,
bottom=2.5cm,dvips]{geometry}

\setcounter{section}{1}

\newtheorem{thm}{Theorem}[section]

\newtheorem{lem}[thm]{Lemma}
\newtheorem{rem}[thm]{Remark}

\newcommand{\N}{\mathbb{N}}
\makeatletter

\newlist{casenv}{enumerate}{4}
\setlist[casenv]{leftmargin=*,align=left,widest={iiii}}
\setlist[casenv,1]{label={{\itshape\ \casename} \arabic*.},ref=\arabic*}
\setlist[casenv,2]{label={{\itshape\ \casename} \roman*.},ref=\roman*}
\setlist[casenv,3]{label={{\itshape\ \casename\ \alph*.}},ref=\alph*}
\setlist[casenv,4]{label={{\itshape\ \casename} \arabic*.},ref=\arabic*}

\makeatother

\providecommand{\casename}{Case}

\date{\today}

\begin{document}
\title{\bf On the exponential Diophantine equation ${\displaystyle p\cdot 3^{x}+p^{y}=z^2}$ with $p$ a prime number}
\author{ Buosi, M.\footnote{Rodovia MGT 367 – Km 583, nº 5000, Alto da Jacuba, Diamantina-MG 39100-000, Brazil}; Ferreira, G.S. $^{\ast} $ and Porto, A.L.P.$^{\ast} $ \\
 marcelo.buosi@ufvjm.edu.br\\
 gilmar.ferreira@ufvjm.edu.br \\
 ander.porto@ufvjm.edu.br
 }
 
\maketitle

\begin{abstract}
In this paper we find non-negative integer solutions for exponential Diophantine equations of the type $p \cdot 3^x+ p^y=z^2,$ where $p$ is a prime number. We prove that such equation has a unique solution $\displaystyle{(x,y,z)=\left(\log_3(p-2), 0, p-1\right)}$ if $2 \neq p \equiv 2 \pmod 3$ and $(x,y,z)=(0,1,2)$ if $p=2$.  We also display the infinite solution set of that equation in the case $p=3$. Finally, a brief discussion of the case $p \equiv 1 \pmod 3$ is made, where we display an equation that does not have a non-negative integer solution and leave some open questions. 
The proofs are based on the use of the 
properties of the modular arithmetic.   
\end{abstract}

\noindent 2010 {\it Mathematics Subject Classification}: 11A07, 11A41, 11D61.\\
\noindent {\it Keywords}: Congruences; Exponential Diophantine equations.
\maketitle

\section*{Introduction}
Diophantine equations of the form $a^x + b^y = c^z$ have been studied by numerous mathematicians for many decades and by a variety of methods. One of the first references to these equations was given by Fermat-Euler \cite{euler}, showing that $(a, c)=(5,3)$ is the unique positive integer solution of the equation $a^2 + 2 = c^3$. Several works on exponential Diophantine equations have been developed in recent years. In 2011, Suvarnamani \cite{surva1} studied the Diophantine equation  $2^x + p^y = z^2$. Rabago \cite{rabago} studied the equations $3^x + 19^y = z^2$ and $3^x + 91^y = z^2.$ The solution sets are \{(1, 0, 2),(4, 1, 10)\} and \{(1, 0, 2),(2, 1, 10)\}, respectively. A. Suvarnamani {\it et al.} \cite{surva} found solutions of two Diophantine equations $4^x + 7^y = z^2$ and $4^x + 11^y = z^2$. Sroysang (see \cite{sro}) studied the Diophantine equation $3^x + 17^y=z^2$. Chotchaisthit (see \cite{chot}) showed that the Diophantine equation $p^x + (p+1)^y =z^2$ has unique solutions $(p,x,y,z)=(7,0,1,3)$  and $ (p,x,y ,z)=(3,2,2,5)$  if $(x,y,z) \in \N^3$ and $p$  is a Mersenne prime. In 2019, Thongnak {\it et al. } (see \cite{thong}) found exactly two non-trivial solutions for the equation $ 2^x - 3^y = z^2, $ namely $(1,0,1)$ and $(2,1,1).$ Buosi {\it et al. } (see \cite{buosi1} and \cite{buosi2}) studied some exponential Diophantine equations that generalized the work of Thongnak {\it et al. } (see \cite{thong}). 

 In this work we show that when $p>3$ is a prime integer such that $p \equiv 2 \pmod 3$, there is an ordered triple $(x,y,z)$ of non-negative integers that solves the equation $p \cdot 3^x + p^y=z^2$ if and only if $p-2$ is a non-trivial power of $3$. In the affirmative case, there exists only one solution which is given by \[(x,y,z)=(\log_3(p-2), 0, p-1).\] This result generalizes the theorem obtained in Thongnak {\it et al. } \cite{thong2} when $p=11.$ We also determine the unique solution of the case $p=2$ and the infinite set of solutions when $p = 3$.  The case where $p$ is congruent to $1$ modulo $3$ has not been solved completely because it is not understood why there are situations whose equation has a solution and others that do not. At the end of the article, a brief discussion of the case $p \equiv 1 \pmod 3$ is made, showing an example of an equation with no solution and suggesting some open questions.


\section*{Some notations}
Denote by $\mathbb{Z}$ be the set of integer numbers and let $\mathbb{N}$ be the set of all positive integers together with the number $0$, that is, $\mathbb{N}=\{0,1,2,3, \ldots\}$, such a set will be called the set of {\it natural numbers}. Define $\mathbb{N}^{\ast}= \mathbb{N} \backslash \{0\}$ and  $\mathbb{N}^{q} = \mathbb{N} \times \mathbb{N} \times \cdots \times  \mathbb{N}$ as the {\it cartesian product} of $q$ copies of $\mathbb{N}.$ When $a$ divides $b$ we will use the symbol $a\,|\,b.$  We will use the $\equiv$ symbol for congruence module $m$ and $a \equiv b\pmod m$ means that $a$ is congruent to $b$ module $m$. Let $a,m$ be integers with $a>0$ and $m >2.$  The smallest positive integer $k$ such that $a^k \equiv 1 \pmod m$ will be said the \emph{order} of $a$ modulo $m$ and will be denoted as $|a|_m =k$.  The set of all non-negative integer solutions of the equation $p \cdot 3^x + p^y = z^2$ will be said simply the \textit{solution set of the equation}, i. e., the set $\{(x,y,z) \in \mathbb{N}^{3}\,|\, p \cdot 3^x + p^y=z^2\}$.






\section*{Results}

In this section we will find the solution set for the equation \begin{equation}
    p \cdot 3^x+p^y=z^2, (x,y,z) \in \N^3,  
\end{equation} for several prime integers. We will divide the results into four sections: case $p=3$, general results for $p \neq 3$, case $p \equiv 2 \pmod 3$, and finally we will make a brief explanation of the case $p \equiv 1 \pmod 3$ since in this case the general problem still remains open. The motivation for this work is the paper Thongnak {\it et al. } \cite{thong2} where the authors solved the above equation in the particular case $p=11.$ The result of Thongnak {\it et al. } is an immediate consequence of Theorem \ref{generaliza thong}  proved in this article. 



\subsection*{Case $p=3$}\label{p=3}
 
In this subsection we present all the non-negative integer solutions of the equation $\displaystyle{p \cdot 3^x+p^y=z^2}$ in the particular case when $p=3$.

\begin{thm} \label{mainp3} The solution set of the Diophantine exponential equation \begin{equation}\label{eqdo3}
3 \cdot 3^x+3^y=z^2 
\end{equation}
in $\N^3$ is $\displaystyle{\left\{ \left(2n,2n,2 \cdot 3^n\right); \, n \in \N \right\} \cup \left\{ \left(1+2n,3+2n,2 \cdot 3^{n+1}\right); \, n \in \N \right\}.}$
\end{thm}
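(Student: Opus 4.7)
The plan is to rewrite equation (\ref{eqdo3}) as $3^{x+1}+3^y=z^2$ and then to factor out the smaller power of $3$, splitting into three cases according to whether $x+1<y$, $x+1=y$, or $x+1>y$.

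First I would dispose of the case $x+1=y$: the equation becomes $2\cdot 3^{x+1}=z^2$, and since the $2$-adic valuation of the left-hand side is $1$ while any square has even $2$-adic valuation, no solution exists. This is the ``impossible'' middle case.

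Next I would handle the case $x+1>y$. Factoring gives $3^y(3^{x+1-y}+1)=z^2$. Because $3^{x+1-y}+1\equiv 1\pmod 3$, the factor in parentheses is coprime to $3$, so $3^y$ must itself be a square (forcing $y=2n$ for some $n\in\N$) and $3^{x+1-y}+1$ must be a square, say $t^2$. Writing $3^{x+1-y}=(t-1)(t+1)$ and using that $\gcd(t-1,t+1)$ divides $2$ while both factors must be powers of $3$, I get $t-1=1$, hence $t=2$ and $x+1-y=1$. This yields the family $(x,y,z)=(2n,2n,2\cdot 3^n)$. The case $x+1<y$ is entirely symmetric: $3^{x+1}(1+3^{y-x-1})=z^2$ forces $x+1$ to be even, say $x+1=2(n+1)$, and the factor $1+3^{y-x-1}$ to be a square $s^2$; the same consecutive-powers-of-$3$ argument produces $s=2$ and $y-x-1=1$, giving $(x,y,z)=(2n+1,2n+3,2\cdot 3^{n+1})$.

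Finally I would verify the converse by plugging both parameterizations back into (\ref{eqdo3}): for the first family, $3\cdot 3^{2n}+3^{2n}=4\cdot 3^{2n}=(2\cdot 3^n)^2$, and for the second, $3\cdot 3^{2n+1}+3^{2n+3}=3^{2n+2}(1+3)=4\cdot 3^{2n+2}=(2\cdot 3^{n+1})^2$. The main (modest) obstacle is the extraction of consecutive powers of $3$ from the factorization $(t-1)(t+1)$, where one must argue from $\gcd(t-1,t+1)\mid 2$ and coprimality with $3$ that one of the factors must equal $1$; everything else is bookkeeping with exponents.
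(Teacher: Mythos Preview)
Your proof is correct and follows essentially the same approach as the paper: both compare the exponents $x+1$ and $y$, factor out the smaller power of $3$, use coprimality to force the cofactor $3^{k}+1$ to be a square, and then solve $3^{k}=(t-1)(t+1)$ to conclude $t=2$, $k=1$. Your organization is a bit more direct---the paper routes the same reasoning through a chain of six lemmas (first establishing $y-x\in\{0,1,2\}$, then eliminating $y-x=1$)---but the underlying argument is identical.
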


The proof is based on the combination of the results of the following six  lemmas.

\begin{lem}\label{0506231050} If $\displaystyle{(y,z) \in \N^2}$ is a solution of the equation  $\displaystyle{3^{y+1}+1=z^2}$ then $y=0$.
\end{lem}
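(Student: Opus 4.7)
The plan is to rewrite the equation as a difference of two squares $(z-1)(z+1) = 3^{y+1}$ and exploit the fact that the two factors on the left differ by exactly $2$ while the right-hand side is a prime power. First I would observe that $3^{y+1}$ is odd, so $z$ must be even; hence both $z-1$ and $z+1$ are odd, and any common divisor of them divides their difference $2$, forcing $\gcd(z-1,z+1)=1$. Since their product is $3^{y+1}$, two coprime factors of a prime power must themselves be powers of $3$, and coprimality then forces one of them to equal $3^{0}=1$.

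Next I would rule out $z+1=1$, which would give $z=0$ and $(z-1)(z+1)=-1$, contradicting $3^{y+1}\geq 3$. Hence the only possibility is $z-1=1$. This yields $z=2$ and consequently $z+1=3=3^{y+1}$, which forces $y+1=1$, i.e., $y=0$. A quick check confirms that $(y,z)=(0,2)$ indeed satisfies $3^{1}+1=4=2^{2}$.

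The argument is short and direct; the main care needed is setting up the coprimality of $z-1$ and $z+1$ cleanly so that the prime-power structure on the right collapses the problem to inspecting the two cases above. No modular reductions, orders, or deeper divisibility tools are required, which fits naturally with the elementary style announced in the abstract. The only place where a reader could stumble is forgetting that $\mathbb{N}$ here includes $0$, which is why the $z=0$ sub-case has to be explicitly discarded rather than silently ignored.
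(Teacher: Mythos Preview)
Your argument is correct and follows essentially the same route as the paper: factor $z^{2}-1=(z-1)(z+1)=3^{y+1}$ and conclude that $z-1=1$, hence $z=2$ and $y=0$. The paper compresses this into a single line, leaving the coprimality of $z-1$ and $z+1$ implicit, while you spell it out carefully; otherwise the two proofs coincide.
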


\begin{proof} $3^{y+1}+1=z^2 \Longrightarrow 3^{y+1}=(z+1)(z-1) \Longrightarrow z-1=1, z+1=3 \Longrightarrow y=0$.
\end{proof}

\begin{lem}\label{0106231050} If $\displaystyle{(x,y,z) \in \N^3}$ is a solution of the equation  $\displaystyle{3^x\left(3^{y+1}+1\right)=z^2}$ then $y=0$.
\end{lem}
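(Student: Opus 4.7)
The plan is to reduce the equation $3^x(3^{y+1}+1)=z^2$ to the form already handled by Lemma \ref{0506231050}, by exploiting a coprime factorization of the left-hand side.

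First I would observe that $3^{y+1}+1 \equiv 1 \pmod 3$, so the two factors $3^x$ and $3^{y+1}+1$ on the left are coprime. Since their product is a perfect square and they are coprime, each factor must itself be a perfect square. I would make this precise by tracking the $3$-adic valuation: from $v_3(z^2) = v_3\bigl(3^x(3^{y+1}+1)\bigr) = x$ I conclude that $x$ is even, say $x=2k$, and that $3^{k} \mid z$ by unique factorization in $\mathbb{Z}$.

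Next, writing $z = 3^k w$ with $w \in \N$ and $\gcd(w,3)=1$, the equation collapses to
\[
3^{y+1}+1 = w^2,
\]
so the pair $(y,w) \in \N^2$ is a solution of the equation treated in Lemma \ref{0506231050}. Invoking that lemma immediately yields $y=0$, which is the desired conclusion.

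I do not expect any real obstacle here: once the coprimality of $3^x$ and $3^{y+1}+1$ is noted, the argument is a short bookkeeping step with the $3$-adic valuation followed by a direct appeal to the preceding lemma. The only care required is to justify $3^k \mid z$ (so that $w \in \N$), which is immediate from $3^{2k}\mid z^2$ and unique factorization.
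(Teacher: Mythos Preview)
Your proof is correct and follows essentially the same approach as the paper: both exploit that $3^x$ and $3^{y+1}+1$ are coprime (since $3\nmid 3^{y+1}+1$) to force $3^{y+1}+1$ to be a perfect square, and then invoke Lemma~\ref{0506231050}. The paper phrases this by contradiction (assume $y>0$, pick a prime of odd multiplicity in $3^{y+1}+1$, and argue it would have to be $3$), whereas your direct use of the $3$-adic valuation is a bit cleaner, but the substance is identical.
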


\begin{proof} Suppose there exists  $(x,y,z) \in \N^3$ such that  $\displaystyle{3^x\left(3^{y+1}+1\right)=z^2}$ and $y>0$.  By Lemma \ref{0506231050}, $3^{y+1}+1$ it is not a perfect square. Thus there is a prime integer $q$ that appears an odd number of times in the prime factorization of $3^{y+1}+1$. Since $q \, | \, z^2$ we have two possibilities:
\begin{align*}
\begin{cases}
    x=0  \Longrightarrow 3^{y+1}+1 \text{ is a perfect square; } \\
    x>0 \Longrightarrow q \, | \, 3^x \Longrightarrow  q = 3 \Longrightarrow 3 \, | \, 1. 
    \end{cases}
\end{align*}
In both cases we have an absurd. Therefore $y=0$. \end{proof}

\begin{lem}\label{0106231010} If $(x,y,z) \in \N^3$ is a solution of the equation  (\ref{eqdo3}) then $y -x \in \{0,1,2\}$.
\end{lem}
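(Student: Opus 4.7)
The plan is to rewrite the equation as $3^{x+1}+3^y=z^2$, factor out the smallest power of $3$ appearing on the left-hand side, and invoke either Lemma \ref{0106231050} or an elementary coprimality argument to bound $y-x$ from each side.

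For the lower bound $y-x\ge 0$, I would argue by contradiction: suppose $y<x$. Factoring out $3^y$ yields $3^y(3^{(x-y)+1}+1)=z^2$, which has the form $3^a(3^{b+1}+1)=z^2$ with $a=y$ and $b=x-y>0$. By Lemma \ref{0106231050} any solution of this shape must satisfy $b=0$, contradicting $x-y>0$. Hence $y\ge x$.

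For the upper bound $y-x\le 2$, suppose $y\ge x+3$ and set $k=y-x-1\ge 2$. Factoring out $3^{x+1}$ gives $3^{x+1}(1+3^k)=z^2$, and since $1+3^k\equiv 1\pmod 3$ the two factors are coprime. As their product is a perfect square, each factor must itself be a square. Writing $1+3^k=m^2$ leads to $3^k=(m-1)(m+1)$, so both $m-1$ and $m+1$ must be (positive) powers of $3$; the only two powers of $3$ differing by $2$ are $1$ and $3$, forcing $m=2$ and $k=1$, contradicting $k\ge 2$. Thus $y-x\le 2$, and combining with the previous paragraph gives $y-x\in\{0,1,2\}$.

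The main obstacle is the careful relabeling required to apply Lemma \ref{0106231050}, since that lemma is stated in its own variable names and one must track which letter plays which role after the factorization. Beyond that bookkeeping, the upper bound reduces to the classical observation that two powers of $3$ cannot differ by $2$ except for the pair $(1,3)$, which is a one-line check.
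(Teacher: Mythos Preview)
Your proof is correct and follows the same strategy as the paper: factor out the smaller power of $3$ on the left-hand side and reduce each inequality to the impossibility of $3^{m}+1$ being a perfect square for $m\ge 2$. The only cosmetic difference is that the paper invokes Lemma~\ref{0106231050} for \emph{both} bounds, whereas you cite it for $y\ge x$ and instead unfold the coprimality-and-factorization argument by hand for $y-x\le 2$; that inline argument is exactly the content of Lemmas~\ref{0506231050} and~\ref{0106231050}, so the two routes coincide.
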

\begin{proof} If $y<x$ then there exists an integer $k> 0$ such that $x=y+k$. Replacing $x$ in (\ref{eqdo3}) with $y+k$  we obtain
\begin{align*}
3 \cdot 3^{y+k}+3^y=z^2 & \Longleftrightarrow 3^y\left(3^{k+1}+1\right)=z^2, 
\end{align*} 
which contradicts  Lemma \ref{0106231050}. Therefore $x \leq y.$ 

If $y-x \geqslant 3$ then $y=x+k$ for some integer $k \geqslant 3$. Replacing $y$ in \eqref{eqdo3} with $x+k$  we obtain 
\begin{align*}
 3 \cdot 3^x+3^{x+k}=z^2                     & \Longleftrightarrow 3^{x+1}\left(3^{k-1}+1\right)=z^2,
\end{align*} which is a contradiction with Lemma \ref{0106231050}. Therefore $y -x \in \{0,1,2\}$. \end{proof}

\begin{lem}\label{0106231233} If $(x,y,z) \in \N^3$ is a solution of the equation  (\ref{eqdo3}) then $x=y$ or $y=x+2$.
\end{lem}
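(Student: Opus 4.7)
By Lemma \ref{0106231010} we already know $y-x \in \{0,1,2\}$, so the plan is simply to rule out the middle case $y = x+1$ and conclude that $y-x \in \{0,2\}$, i.e., $x = y$ or $y = x+2$.

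Assume for contradiction that $(x,y,z) \in \N^3$ solves \eqref{eqdo3} with $y = x+1$. Substituting into \eqref{eqdo3} gives
\[
z^2 = 3 \cdot 3^x + 3^{x+1} = 3^{x+1} + 3^{x+1} = 2 \cdot 3^{x+1}.
\]
Now I would extract a contradiction from the $2$-adic valuation of both sides. The right-hand side $2 \cdot 3^{x+1}$ has exactly one factor of $2$, while the left-hand side $z^2$ must have an even number of factors of $2$ (zero if $z$ is odd, at least two if $z$ is even). Concretely: if $z$ is odd then $z^2$ is odd but $2\cdot 3^{x+1}$ is even, a contradiction; and if $z = 2w$ is even, then $4w^2 = 2\cdot 3^{x+1}$ forces $2w^2 = 3^{x+1}$, whose left side is even while the right side is odd, again a contradiction.

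Hence no solution with $y = x+1$ exists, and together with Lemma \ref{0106231010} this gives $y - x \in \{0,2\}$, as required. I do not expect any real obstacle here, since the parity argument above is self-contained and direct.
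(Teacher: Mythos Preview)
Your proof is correct and follows essentially the same approach as the paper: invoke Lemma \ref{0106231010} to reduce to $y-x\in\{0,1,2\}$, then rule out $y=x+1$ by noting that $z^2 = 2\cdot 3^{x+1}$ is impossible since the right side is divisible by $2$ but not by $4$. Your version spells out the parity cases more explicitly, but the argument is the same.
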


\begin{proof} By Lema \ref{0106231010}, $y-x \in \{0,1,2\}$. Suppose $y=x+1$. Replacing $y$ in \eqref{eqdo3} with $x+1$ we obtain
\begin{align*}
    3^{x+1}+3^{x+1}=z^2 & \Longrightarrow 2 \cdot 3^{x+1} = z^2 \Longrightarrow 2 \, | \, z^2 \text{ and } 4 \text{ does not divide } z^2,
\end{align*} 
which is an absurd. Therefore $y -x \in \{0, 2\}$.
\end{proof}

\begin{lem}\label{0106231020} If $(x,y,z) \in \N^3$ is a solution of the equation  (\ref{eqdo3}) and $x=y$, then there exists  $n \in \N$ such that \[
x=y=2n \text{ and } z=2 \cdot 3^n.
\]
\end{lem}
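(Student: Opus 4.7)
The plan is to substitute $y=x$ directly into equation \eqref{eqdo3} and reduce the problem to showing that $3^x$ is a perfect square. Setting $y=x$ gives
\[
3 \cdot 3^x + 3^x = 4 \cdot 3^x = z^2,
\]
so I would first conclude that $z$ is even, write $z=2w$ for some $w \in \N$, and obtain $3^x = w^2$.

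Next, since $3$ is prime and $3^x$ is a perfect square, the exponent $x$ must be even; I would justify this either by unique factorization (the exponent of $3$ in any perfect square must be even) or by a short parity argument on $|w|_3$. Writing $x = 2n$ with $n \in \N$ then yields $w^2 = 3^{2n} = (3^n)^2$, hence $w = 3^n$ and therefore $z = 2 \cdot 3^n$.

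The main obstacle is essentially zero here: the whole argument is a one-line factorization once $y=x$ is substituted. The only subtlety worth stating cleanly is the parity of $x$, but this is immediate from the prime factorization of $z^2$. I would present the proof in three short implications: $x=y \Rightarrow 4\cdot 3^x = z^2 \Rightarrow 3^x$ is a square $\Rightarrow x$ is even $\Rightarrow z=2\cdot 3^{x/2}$.
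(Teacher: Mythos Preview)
Your proposal is correct and follows essentially the same route as the paper: substitute $x=y$ to obtain $4\cdot 3^x=z^2$, deduce that $x$ is even, and read off $z=2\cdot 3^{x/2}$. The only cosmetic difference is that you pass through $z=2w$ and $3^x=w^2$, whereas the paper jumps directly from $4\cdot 3^x=z^2$ to ``$x$ is even'' and then takes the square root.
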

\begin{proof}
Making $x=y$ in equation (\ref{eqdo3}) we get 
\begin{align*}
    3^{x+1}+3^x=z^2 & \Longrightarrow 4 \cdot 3^x = z^2 \Longrightarrow x \text{ is even}.
\end{align*}
Henceforth there exists $n \in \N$ such that $y=x=2n$  and $\displaystyle{z=\sqrt{4 \cdot 3^{2n}}=2 \cdot 3^n}$. 
\end{proof}

\begin{lem}\label{0106231200} If $(x,y,z) \in \N^3$ is a solution of the equation  (\ref{eqdo3}) and $y-x=2$, then there exists $n \in \N$ such that \[
x=1+2n, y=3+2n \text{ and } z=2 \cdot 3^{n+1}.
\]
\end{lem}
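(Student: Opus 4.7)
The plan is to mimic the strategy used in Lemma \ref{0106231020}: substitute the assumed linear relation between $y$ and $x$ into equation \eqref{eqdo3}, factor out the appropriate power of $3$, and then use the resulting equation to pin down the parity of the exponent.

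More concretely, I would begin by setting $y = x+2$ in \eqref{eqdo3}, which yields
\begin{equation*}
3^{x+1} + 3^{x+2} = z^2 \Longleftrightarrow 3^{x+1}(1+3) = z^2 \Longleftrightarrow 4 \cdot 3^{x+1} = z^2.
\end{equation*}
Since $4 = 2^{2}$ is already a perfect square and $3^{x+1}$ must therefore also be a perfect square (as $\gcd(2,3)=1$), the exponent $x+1$ must be even. Hence there exists $n \in \N$ such that $x+1 = 2(n+1)$, that is, $x = 1+2n$, and consequently $y = x+2 = 3+2n$.

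Finally, taking square roots in $z^{2} = 4 \cdot 3^{2(n+1)}$ (and recalling $z \geq 0$) gives $z = 2 \cdot 3^{n+1}$, which completes the proof. No step here is really an obstacle: the argument is essentially a routine parity check on the exponent of $3$ after factoring. The only minor subtlety is to observe that the exponent $x+1$ must be even \emph{and} non-negative, so that the corresponding $n$ indeed lies in $\N$; this is immediate because $x \in \N$ forces $x+1 \geq 1$, so the smallest admissible value $x=1$ corresponds to $n = 0$.
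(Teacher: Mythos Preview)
Your proof is correct and follows essentially the same route as the paper's: substitute $y=x+2$ into \eqref{eqdo3}, factor to obtain $4\cdot 3^{x+1}=z^2$, deduce that $x+1$ is even (equivalently $x$ is odd), and read off $x=1+2n$, $y=3+2n$, $z=2\cdot 3^{n+1}$.
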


\begin{proof} Making $y=x+ 2$ in equation (\ref{eqdo3}) we get \begin{align*}
    3^{x+1}+3^{x+2}=z^2 & \Longrightarrow 4 \cdot 3^{x+1} = z^2 \Longrightarrow x \text{ is odd}.
\end{align*} 
Henceforth there exists $n \in \N$ such that $x=1+2n$, $y=3+ 2n$ and $\displaystyle{z=\sqrt{4 \cdot 3^{2n+2}}=2 \cdot 3^{n+1}}$.
\end{proof}

\subsection*{General results for a prime $p \neq 3$}

\begin{lem}\label{2105231820} Let $p \neq 3$ be a prime integer. If $(x,y,z) \in \mathbb{N}^3$ is a solution of \begin{equation}\label{main_equation}
    p \cdot 3^x+p^y=z^2, 
\end{equation} then $y=0$ or $y=1$.    
\end{lem}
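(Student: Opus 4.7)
The plan is to argue by contradiction, assuming $y \geq 2$, and to extract from this a divisibility condition that $p \neq 3$ cannot satisfy. The entire argument should fit in two short modular steps, first mod $p$ and then mod $p^2$.

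First I would reduce equation \eqref{main_equation} modulo $p$. Since $y \geq 2 \geq 1$, both $p \cdot 3^x$ and $p^y$ are divisible by $p$, so $p \mid z^2$. As $p$ is prime this forces $p \mid z$, and hence $p^2 \mid z^2$. This is the pivot of the proof: having upgraded the divisibility on the right-hand side from $p$ to $p^2$, I can now read the equation modulo $p^2$.

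Second, reducing modulo $p^2$ and using that $y \geq 2$ implies $p^2 \mid p^y$, I obtain
\[
p \cdot 3^x \equiv 0 \pmod{p^2},
\]
which is equivalent to $p \mid 3^x$. If $x \geq 1$ this forces $p = 3$ (since $p$ is prime and the only prime factor of $3^x$ is $3$), contradicting the hypothesis $p \neq 3$. If $x = 0$ then $p \mid 3^x = 1$, which is also impossible. Either way we reach a contradiction, so $y \leq 1$, i.e.\ $y \in \{0,1\}$.

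There is no real obstacle here; the only subtle point is remembering to cover the boundary case $x = 0$ when concluding from $p \mid 3^x$, and to check that the step $p \mid z \Rightarrow p^2 \mid z^2$ genuinely gives the extra factor of $p$ needed to pass from the mod-$p$ to the mod-$p^2$ reduction. The hypothesis $p \neq 3$ enters in exactly one place, namely in ruling out $p \mid 3^x$.
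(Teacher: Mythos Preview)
Your proof is correct and is essentially the same argument as the paper's: both assume $y \geq 2$, deduce $p \mid z$ (hence $p^2 \mid z^2$), and then obtain $p \mid 3^x$, which contradicts $p \neq 3$ (with the edge case $x=0$ handled separately). The only cosmetic difference is that the paper writes $z = mp$ and manipulates algebraically to reach $3^x = p(m^2 - p^{y-2})$, whereas you phrase the same deduction as reductions modulo $p$ and then $p^2$.
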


\begin{proof} Let $ (x,y,z)$  be a solution of (\ref{main_equation}). Assume $y \geqslant 2$. It is clear that $z \neq 0.$  In this case, $p$ divides $z$ because
\[
p \cdot 3^x+p^y=z^2 \Longrightarrow p\left(3^x+p^{y-1}\right)=z^2.
\]
Let $m \in \N^*$ such that $z=mp$. Substitute $mp$ for $z$ in the above equation to obtain 
\[
3^x=p\left(m^2-p^{y-2}\right).
\]
If $x=0$ the above equation is an absurd for all prime integer $p \geq 2$. If $x>0$ we have $3 \, | \,p$ which is absurd for all prime integer $p \neq 3.$ Therefore $y=0$ or $y=1$. \end{proof}

If one substitute $0$ for $y$ in the equation \eqref{main_equation} one obtain $\displaystyle{p \cdot 3^x+1=z^2}$ which is equivalent to the following equation
\begin{align}
   p  \cdot 3^x & = z^2 -1= (z-1)(z+1). \label{second_equation}
\end{align}

\begin{lem}\label{2305231202} Let $p \neq 3$ be a prime integer. If $(x,z) \in \N^2$ is a solution of \eqref{second_equation}, then $x>0$ and $z \not\equiv 0 \pmod {3}$.    
\end{lem}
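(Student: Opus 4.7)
The plan is to handle the two claims separately, each by a short case analysis, with the second claim leaning on the first.

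For the first claim that $x>0$, I would argue by contradiction: suppose $x=0$, so that equation \eqref{second_equation} reduces to $p=(z-1)(z+1)$. Since $z\in\N$ and the right-hand side must equal the prime $p\geq 2$, one first rules out $z\in\{0,1\}$ (these give $-1$ and $0$ respectively, neither equal to $p$). For $z\geq 2$ both factors are positive integers with $z-1<z+1$, and primality of $p$ forces the unique factorization $z-1=1$ and $z+1=p$, hence $p=3$. This contradicts the hypothesis $p\neq 3$, so $x>0$.

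For the second claim that $z\not\equiv 0 \pmod 3$, I would use the first part. Since $x>0$, the left-hand side of \eqref{second_equation} satisfies $p\cdot 3^x\equiv 0\pmod 3$. If one had $z\equiv 0\pmod 3$, then $z^2\equiv 0\pmod 3$ and therefore $(z-1)(z+1)=z^2-1\equiv -1\equiv 2\pmod 3$, contradicting the previous congruence. Hence $z\not\equiv 0\pmod 3$.

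There is really no main obstacle here: both statements follow from elementary factorization and a single reduction modulo $3$. The only subtlety worth being explicit about is the need to exclude the degenerate values $z=0$ and $z=1$ when analyzing the case $x=0$, so that the factorization $p=(z-1)(z+1)$ can be interpreted as a product of positive integers and the primality of $p$ can be invoked.
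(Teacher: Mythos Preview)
Your proposal is correct and follows essentially the same route as the paper: rule out $x=0$ via the factorization $p=(z-1)(z+1)$ forcing $p=3$, and then use $x>0$ together with a reduction modulo $3$ to exclude $z\equiv 0\pmod 3$. The only cosmetic difference is that the paper phrases the second step as ``$3$ divides $z^{2}-p\cdot 3^{x}=1$'' rather than comparing residues, and it does not spell out the degenerate cases $z\in\{0,1\}$; your added care there is fine but not essential.
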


\begin{proof} Let $(x,z)$ be a solution of \eqref{second_equation}. If $x=0$ then 
\[ p=(z-1)(z+1) \Longrightarrow z=2 \text{ and } p=3,
\]
which is a contradiction. Hence $x>0$. If $3$ divides $z$ then $3$ divides $z^2-p\cdot 3^x =1$, which is an absurd. Therefore $z \not\equiv 0 \pmod {3}$. 
\end{proof}

We say that $h$ is a non-trivial power of $3$ if $h=3^x$ with $x \in \mathbb{N}^{*}$.

\begin{lem}\label{2305231206} Let $p>3$ be a prime integer. The equation \eqref{second_equation} has a solution in $\N^2$ if and only if $p-2$ is a non-trivial power of $3$ or $p+2$ is a non-trivial power  of $3$. In the affirmative case, the equation \eqref{second_equation} has a unique solution in $\N^2$  given by 
\begin{align*}
\left(\log_3(p-2),p-1\right) & \text{ if } p-2 \text{ is  a non-trivial power of } 3; \\
\left(\log_3(p+2),p+1\right) & \text{ if } p+2 \text{ is  a non-trivial power of } 3.
\end{align*}
\end{lem}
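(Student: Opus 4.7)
The plan is to factor $(z-1)(z+1) = p \cdot 3^x$ and exploit the fact that the two factors on the left must be coprime. First, by Lemma \ref{2305231202} we may assume $x \geq 1$ and $z \not\equiv 0 \pmod 3$. Since $p > 3$ is an odd prime, the right-hand side $p \cdot 3^x$ is odd, so $z^2 = p\cdot 3^x + 1$ is even and hence $z$ is even. Therefore both $z-1$ and $z+1$ are odd, and since their difference is $2$ they must be coprime.

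Next, because $\gcd(z-1, z+1) = 1$ and $(z-1)(z+1) = p \cdot 3^x$ with $p \neq 3$ prime, every prime power appearing in $p \cdot 3^x$ must lie entirely in one of the two factors. The only coprime unordered factorizations of $p\cdot 3^x$ are therefore $\{1,\, p \cdot 3^x\}$ and $\{3^x,\, p\}$. Combined with the constraint $(z+1)-(z-1)=2$, this leaves exactly three sub-cases: either $p \cdot 3^x - 1 = 2$, which forces $p \cdot 3^x = 3$ and is impossible for $p > 3$; or $p - 3^x = 2$, which rewrites as $p - 2 = 3^x$ with $z = p - 1$; or $3^x - p = 2$, which rewrites as $p + 2 = 3^x$ with $z = p + 1$. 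Since $x \geq 1$, the resulting power of $3$ is automatically non-trivial, which establishes the ``only if'' direction together with the explicit form of $(x, z)$.

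For the converse and for uniqueness, one verifies directly that if $p - 2 = 3^x$ with $x \geq 1$ then
\[
(p - 1)^2 - 1 \;=\; p(p - 2) \;=\; p \cdot 3^x,
\]
so $(x, p-1)$ solves \eqref{second_equation}; the analogous check handles the case $p + 2 = 3^x$. Within each case, $x$ is determined by $p$ via $3^x = p \pm 2$, giving uniqueness. To rule out the two cases occurring simultaneously, note that $p - 2 = 3^a$ and $p + 2 = 3^b$ with $a, b \geq 1$ would yield $3^b - 3^a = 4$, which is impossible since the left-hand side is divisible by $3$. No step presents a genuine obstacle: the crux is simply that coprimality of $z \pm 1$ collapses the a priori many divisor pairs of $p \cdot 3^x$ down to the short list above, after which the case analysis is routine.
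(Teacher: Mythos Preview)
Your proof is correct and follows essentially the same strategy as the paper: factor $(z-1)(z+1)=p\cdot 3^x$ and determine which factor carries $p$ and which carries $3^x$. The only cosmetic difference is that the paper splits into cases according to $z\bmod 3$ to locate the factor divisible by $3$, whereas you first observe that $z$ is even to get $\gcd(z-1,z+1)=1$ and then enumerate the coprime factorizations; both routes lead to the same short case analysis.
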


\begin{proof} Let $(x,z) \in \N^2$ a solution of \eqref{second_equation}. By Lemma \ref{2305231202}, $x>0$ and  $z \not\equiv 0 \pmod {3}$.

If $z \equiv 1 \pmod {3}$ then $z-1 \equiv 0 \pmod {3}$. Since $\displaystyle{ p \cdot 3^x=(z-1)(z+1)}$ it follows that 
\[
\begin{cases}
    z+1=p \\
    z-1 = 3^x
\end{cases}
\Longrightarrow 
\begin{cases}
z=p-1 \\
3^x=z-1=p-2
\end{cases}
\Longrightarrow
\begin{cases}
z=p-1 \\
x=\log_3(p-2).
\end{cases}
\]

If $z \equiv 2 \pmod {3}$ then $z+1 \equiv 0 \pmod {3}$. Since $\displaystyle{ p \cdot 3^x=(z-1)(z+1)}$ it follows that 
\[
\begin{cases}
    z-1=p \\
    z+1 = 3^x
\end{cases}
\Longrightarrow 
\begin{cases}
z=p+1 \\
3^x=z+1=p+2
\end{cases}
\Longrightarrow
\begin{cases}
z=p+1 \\
x=\log_3(p+2).
\end{cases}
\]
The converse is straightforward and will be omitted. \end{proof}

Making $y=1$ in the equation \eqref{main_equation} one obtain
\begin{equation}\label{third_equation}
 p \cdot 3^x+p=z^2.   
\end{equation}

\begin{lem}\label{2405230800} Let $p>3$ be a prime integer. If $(x,z) \in \N^2$ is a solution of \eqref{third_equation} then $x>0$ and $z \not\equiv 0 \pmod {3}$.    
\end{lem}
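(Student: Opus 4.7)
My plan is to mirror the structure of the proof of Lemma \ref{2305231202}, handling the two claims in sequence by direct contradiction arguments based on elementary divisibility.

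For the claim $x>0$: I would argue by contradiction, substituting $x=0$ into \eqref{third_equation} to obtain $p+p = 2p = z^2$. Since $p>3$ is an odd prime, the $2$-adic valuation of $2p$ is exactly $1$, so $2p$ cannot be a perfect square. This contradiction forces $x>0$.

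For the claim $z \not\equiv 0 \pmod 3$: I would rewrite equation \eqref{third_equation} as $z^2 = p(3^x+1)$ and assume for contradiction that $3 \mid z$, whence $3 \mid z^2$. Since $p>3$ is prime, $\gcd(p,3)=1$, so $3 \mid 3^x+1$. Using the fact already established above that $x>0$, we have $3 \mid 3^x$, and therefore $3 \mid 1$, the desired contradiction.

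I do not expect any serious obstacle here; the statement is an exact analogue of Lemma \ref{2305231202} for the $y=1$ case, and the argument adapts without modification. The only point of care is the order of the two parts: establishing $x>0$ first streamlines the proof of the congruence condition on $z$, since then one need only observe $3 \mid 3^x$ directly rather than splitting cases on whether $x=0$. The hypothesis $p>3$ (as opposed to merely $p \neq 3$) is harmless for our purposes since it is already in force in the statement.
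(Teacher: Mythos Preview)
Your proposal is correct and follows essentially the same approach as the paper: both parts are handled by direct contradiction, first obtaining $2p=z^2$ from $x=0$, then using $x>0$ together with $3\nmid p$ to rule out $3\mid z$. The only cosmetic difference is that the paper deduces $3\mid p$ directly from $3\mid p\cdot 3^x+p$, whereas you factor as $p(3^x+1)$ and conclude $3\mid 1$; these are the same argument.
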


\begin{proof} Suppose $(x,z) \in \N^2$  is a solution of \eqref{third_equation}.

If $x=0$ then $2p=z^2$, which is an absurd. It follows that $x>0$.

If $z \equiv 0 \pmod {3}$ then $3$ divides $z^2=p \cdot 3^x+p$ and therefore $3$ divides $p$ which is a contradiction.
\end{proof}

\subsection*{Case $p \equiv 2 \pmod 3$}

The Theorem \ref{generaliza thong} below presents all the non-negative integer solutions of the equation $\displaystyle{p \cdot 3^x+p^y=z^2}$ in the particular case where  $p \equiv 2 \pmod 3$ and $p \neq 2$. This result generalizes Theorem 2.1 of \cite{thong2} where $p=11$.

\begin{lem} \label{z2naoe2} There is no $z \in \mathbb{Z}$ such that $z^2 \equiv 2 \pmod 3$.
\end{lem}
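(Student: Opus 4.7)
The plan is to prove the statement by direct exhaustion of the residue classes modulo $3$. Every integer $z$ satisfies exactly one of $z \equiv 0$, $z \equiv 1$, or $z \equiv 2 \pmod 3$, so it suffices to compute $z^2 \pmod 3$ in each of these three cases and observe that the result is never $2$.

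First I would note that squaring is compatible with congruence, so $z \equiv r \pmod 3$ implies $z^2 \equiv r^2 \pmod 3$. Then I would run the three cases: if $z \equiv 0$, then $z^2 \equiv 0 \pmod 3$; if $z \equiv 1$, then $z^2 \equiv 1 \pmod 3$; and if $z \equiv 2 \pmod 3$, then $z^2 \equiv 4 \equiv 1 \pmod 3$. In every case $z^2 \in \{0,1\} \pmod 3$, which rules out $z^2 \equiv 2 \pmod 3$.

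There is no real obstacle here; the statement is precisely the assertion that $2$ is a quadratic non-residue modulo $3$, and the case check is immediate. I would present the argument as a single short computation, perhaps writing $z = 3k + r$ with $r \in \{0,1,2\}$ and expanding $z^2 = 9k^2 + 6kr + r^2 \equiv r^2 \pmod 3$ to make the reduction explicit, and then conclude by displaying the three values $0, 1, 1$ of $r^2 \bmod 3$.
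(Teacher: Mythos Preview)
Your proof is correct and matches the paper's own argument essentially line for line: both exhaust the three residue classes of $z$ modulo $3$ and observe that $z^2$ is always congruent to $0$ or $1$. There is nothing to add.
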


\begin{proof} If $z \equiv 0 \pmod 3$ then $z^2 \equiv 0 \pmod 3$. If $z \equiv 1 \pmod 3$ or $z \equiv 2 \pmod 3$ then $z^2 \equiv 1 \pmod 3$.
\end{proof}

\begin{thm}\label{generaliza thong} Let $p > 3$ be a prime integer such that $p \equiv 2 \pmod 3.$  The equation 
\begin{equation}\label{second_main_eq}
p\cdot 3^{x}+p^{y}=z^{2},
\end{equation} 
admits a solution in $\N^3$ if and only if $p-2$ is a non-trivial power of $3$. In the affirmative case, the unique solution is $(x,y,z)=(\log_3(p-2), 0, p-1)$. 
\end{thm}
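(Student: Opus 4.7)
The plan is to reduce the problem to the already-proved lemmas by splitting on the two possible values of $y$ and using a simple mod $3$ argument in each subcase.

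First, by Lemma \ref{2105231820}, any solution $(x,y,z)\in\N^3$ must satisfy $y=0$ or $y=1$, so these are the only two cases to treat.

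For the case $y=1$, the equation becomes \eqref{third_equation}. By Lemma \ref{2405230800}, any solution has $x>0$, so reducing $p\cdot 3^x+p=z^2$ modulo $3$ gives $p\equiv z^2\pmod 3$. Since by hypothesis $p\equiv 2\pmod 3$, we would have $z^2\equiv 2\pmod 3$, contradicting Lemma \ref{z2naoe2}. Hence the case $y=1$ produces no solutions when $p\equiv 2\pmod 3$.

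For the case $y=0$, Lemma \ref{2305231206} says the equation \eqref{second_equation} has a solution in $\N^2$ if and only if $p-2$ or $p+2$ is a non-trivial power of $3$, with the corresponding $(x,z)$ explicit in each case. The only thing left is to rule out that $p+2$ is a non-trivial power of $3$ under the assumption $p\equiv 2\pmod 3$: if $p+2=3^x$ with $x\geq 1$, then reducing mod $3$ gives $p\equiv -2\equiv 1\pmod 3$, contradicting $p\equiv 2\pmod 3$. Therefore the only surviving possibility is that $p-2=3^x$ for some $x\in\N^{*}$, and in that case Lemma \ref{2305231206} delivers the unique solution $(x,y,z)=(\log_3(p-2),0,p-1)$.

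No step here is really difficult; the only subtlety is to double-check the consistency direction, namely that when $p-2$ is indeed a non-trivial power of $3$ the triple $(\log_3(p-2),0,p-1)$ really solves \eqref{second_main_eq}. This is the computation $p\cdot 3^x+1=(3^x+2)\cdot 3^x+1=3^{2x}+2\cdot 3^x+1=(3^x+1)^2=(p-1)^2$, which is already built into Lemma \ref{2305231206}; so the entire argument amounts to invoking the previous lemmas plus two one-line mod $3$ reductions.
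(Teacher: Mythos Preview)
Your proof is correct and follows essentially the same approach as the paper: split on $y\in\{0,1\}$ via Lemma~\ref{2105231820}, eliminate $y=1$ by a mod~$3$ argument using Lemma~\ref{2405230800} and Lemma~\ref{z2naoe2}, and handle $y=0$ via Lemma~\ref{2305231206}. In fact your write-up is slightly more complete than the paper's, since you explicitly rule out the $p+2=3^x$ branch of Lemma~\ref{2305231206} (which the paper leaves implicit) and you verify the converse direction directly.
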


\begin{proof} Let $(x,y,z)$ be a solution of (\ref{second_main_eq}).  
By Lemma \ref{2105231820} we must have $y=0$ or $y=1.$ If $y=0,$ it follows from Lemma \ref{2305231206}  that $(\log_3(p-2), 0, p-1)$ is the unique solution in $\N^3$ of the equation $p\cdot 3^{x}+1=z^{2},$ since $\log_3(p-2)$ is an integer. Now consider $y=1.$ By Lemma \ref{2405230800}, $x \geq 1$.   
So we get \[2 \equiv p \equiv z^2 - p\cdot  3^x \equiv z^2 \pmod 3\] which is a contradiction by Lemma \ref{z2naoe2}.
\end{proof}

\begin{rem}
For example, for $p=17, 23, 41, 53, 59, 71$ the equation of the previous theorem has no non-negative integer solutions. For $p=5, 11, 29, 83$ the solutions are respectively $(1,0,4)$, $(2,0,10)$, $(3,0,28)$ and $(4,0,82)$.  
\end{rem}

\begin{thm} The unique solution of the exponential Diophantine equation  \begin{equation}\label{eqteorema3.6}
2\cdot 3^{x}+2^{y}=z^{2}, \,\, (x,y,z) \in \N^3,  
\end{equation} is the ordered triple $(x,y,z)=(0, 1, 2).$  
\end{thm}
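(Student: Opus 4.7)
The plan is to first verify that $(x,y,z)=(0,1,2)$ is a solution by direct substitution ($2\cdot 3^{0}+2^{1}=2+2=4=2^{2}$). For uniqueness, I would invoke Lemma~\ref{2105231820} with $p=2$ (which satisfies the hypothesis $p\neq 3$) to conclude that any solution must have $y\in\{0,1\}$, and then dispose of the two remaining subcases by short congruence arguments.

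For $y=0$, the equation reads $z^{2}=2\cdot 3^{x}+1$. Since $3^{x}$ is odd, the right-hand side is congruent to $3\pmod{4}$, whereas every perfect square is $0$ or $1\pmod{4}$. Hence no solution occurs in this subcase.

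For $y=1$, the equation becomes $z^{2}=2\bigl(3^{x}+1\bigr)$, so $z$ must be even; writing $z=2w$ yields $2w^{2}=3^{x}+1$. Reducing modulo $3$, if $x\geq 1$ then $2w^{2}\equiv 1\pmod{3}$, i.e.\ $w^{2}\equiv 2\pmod{3}$, which Lemma~\ref{z2naoe2} forbids. Therefore $x=0$, forcing $w=1$ and $z=2$, recovering exactly the triple $(0,1,2)$.

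I do not foresee any genuine obstacle: the argument is a clean case split on $y$ powered by Lemma~\ref{2105231820}, followed by two very short modular computations. The only point of care is that Lemma~\ref{2305231206} was stated only for primes $p>3$ and so cannot be cited verbatim when $y=0$ and $p=2$; the $\pmod{4}$ calculation above plays its role.
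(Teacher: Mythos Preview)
Your argument is correct and follows the same overall scheme as the paper: invoke Lemma~\ref{2105231820} to reduce to $y\in\{0,1\}$, then handle the two subcases by short congruence computations, with the $y=1$ case disposed of via Lemma~\ref{z2naoe2} exactly as in the paper. The one genuine difference is your treatment of $y=0$: the paper uses Lemma~\ref{2305231202} and then factors $2\cdot 3^{x}=(z-1)(z+1)$, splitting on $z\bmod 3$ to reach contradictions, whereas your single observation that $2\cdot 3^{x}+1\equiv 3\pmod 4$ kills the subcase in one line. Both are valid; your version is shorter and sidesteps the need to justify the factor distribution in $(z-1)(z+1)$, while the paper's version stays closer to the machinery (Lemmas~\ref{2305231202}--\ref{2305231206}) developed for general $p$.
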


\begin{proof}
Let $(x,y,z)$ be a solution of (\ref{eqteorema3.6}). By Lemma \ref{2105231820} we must have $y=0$ or $y=1.$ If $y=0,$   it follows from Lemma \ref{2305231202} that $x>0$ and $z \not\equiv 0 \pmod {3}$. In this case we have the following equivalence for equation (\ref{eqteorema3.6}) \[2\cdot 3^{x}+1=z^{2} \Longleftrightarrow  2\cdot  3^x= (z-1) \cdot  (z+1) = z^2 - 1.\] If $z \equiv 1 \pmod {3}$ then $z-1 \equiv 0 \pmod {3}$ and $z+1 \equiv 2 \pmod {3}$ then we have 
\[
\begin{cases}
    z+1=2 \\
    z-1 = 3^x
\end{cases}
\Longrightarrow 
\begin{cases}
z=1 \\
3^x=0,
\end{cases}
\]
an absurd. 

If $z \equiv 2 \pmod {3}$ then $z-1 \equiv 1 \pmod {3}$ and $z+1 \equiv 0 \pmod {3}$ then we have 
\[
\begin{cases}
    z+1=3^x \\
    z-1 = 2
\end{cases}
\Longrightarrow 
\begin{cases}
z=3 \\
3^x=4,
\end{cases}
\]
an absurd. 

Now consider $y=1.$ In this case equation (\ref{eqteorema3.6}) reduces to equation  \[2\cdot 3^{x}+2=2\cdot (3^x +1)=z^{2}.\] If $x=0$ we have $z^2=4,$ so $z=2.$  Therefore $(x,y,z)=(0, 1, 2)$ is a solution to equation (\ref{eqteorema3.6}) in $\N^3.$ If $x>0$ then $z^2 \equiv 2 \cdot (3^x +1) \equiv 2  \pmod {3}$, a contradiction by Lemma \ref{z2naoe2}. Therefore $(x,y,z)=(0, 1, 2)$ is the only solution of equation (\ref{eqteorema3.6}). 
\end{proof}

\subsection*{A brief discussion of the case $p \equiv 1 \pmod 3$}

When $p \equiv 1 \pmod 3,$  the equation \begin{equation} \label{open p 1 mod 3}
    p \cdot 3^x+p^y=z^2, (x,y,z) \in \N^3,  
\end{equation} has not yet been completely solved, that is, the behavior of the solutions of these equations is not known, whether they have a solution and whether the solutions, if any, are finite or infinite. 

Let $(x,y,z)$ be a solution of (\ref{open p 1 mod 3}). By Lemma \ref{2105231820}, $y \in \{0, 1\}$. By Lemma \ref{2305231206} we can say whether equation (\ref{open p 1 mod 3}) will have a solution as long as $p+2$ is a non-trivial power of $3.$  Furthermore, that lemma determines the unique solution in this case.  However, for the case $y=1$ we do not have a conclusive result for the time being. For example, equations with $p=7, 61$ and $547$ respectively have the following solutions $(3,1,14), (5,1,122)$ and $(7,1, 1094)$. We do not know if those three equations have other solutions. 


\begin{rem}
Note that $(q,2) \in \N^2$ is a solution of $3^x+1=p \cdot w^2$ if and only if $(q, 1, 2p)$ is a solution of $p \cdot 3^x+p^y=z^2, (x,y,z) \in \N^3$.    \end{rem} 

In the next theorem we will show an example whose given equation does not have non-negative integer solutions.


	\begin{thm}
		The exponential Diophantine equation \begin{equation}\label{eq p=13}
			13 \cdot 3^x + 13^y = z^2, \, (x,y,z) \in \N^3,
		\end{equation}
	has no solutions.
	\end{thm}

	\begin{proof}
Let $ (x,y,z)$  be a solution of (\ref{eq p=13}). By Lemma \ref{2105231820}, $y \in \{0, 1\}$. First consider $y=0.$ By Lemma \ref{2305231206} there are no solutions to the equation in this case, since $p+2=15$ is not a non-trivial power of $3.$ 

Suppose there is a solution $(x,1,z) \in \N^3$ of (\ref{eq p=13}). In this case equation (\ref{eq p=13}) reduces to $13 \cdot 3^x + 13 = z^2.$   Note that $13$ divides $z$ and therefore $z=13 \cdot w, \, w \in \N^{*}$. So we have the following equivalence of equations \begin{equation}\label{eq 13 equivalentes}
13 \cdot 3^x + 13 =z^{2}=13^{2} \cdot w^2  \Longleftrightarrow 3^{x} +1 = 13 \cdot w^2 \equiv 0 \pmod {13}. 
\end{equation}
On the other hand, notice that $3^2 \equiv 9 \pmod { 13}$ and $3^3 \equiv 1 \pmod  {13}$. Therefore the order of $3$ modulo $13$ is equal to $3,$  that is $ |3|_{13}=3.$ So write $x = 3m + r$, where $m \in \N$ and $r \in \{0,1,2\}$.  So we have the following equation \begin{equation*}
			3^x +1 = 3^{3m + r} +1 = 27^m \cdot 3^r +1 \equiv 3^r +1\pmod {13} \equiv \left\{
			\begin{array}{ccc}
				2 \pmod {13} & \mbox{if } r = 0 \\
				4 \pmod {13} & \mbox{if } r = 1 \\
				10 \pmod {13} & \mbox{if } r=2,
			\end{array}
			\right.
		\end{equation*} an absurd. 
		\end{proof}

\section*{Open questions} 

The following questions refer to the equation \begin{equation} \label{open}
    p \cdot 3^x+p^y=z^2, (x,y,z) \in \N^3 \mbox{ with  } p \equiv 1 \pmod 3.  
\end{equation}

\begin{itemize}
    \item When $p \equiv 1 \pmod 3,$  what additional conditions must exist on $p$ for the equation (\ref{open}) to have a solution?  
    
    

\item If there is a solution for equation (\ref{open}), how do you know if the number of solutions is finite or infinite?

\item What additional conditions must be imposed on $p$ for there to be a unique solution?


\end{itemize}


\end{document}